\newtheorem{Thm}{Theorem} 
\newtheorem{Lem}[Thm]{Lemma}
\newtheorem{Prob}[Thm]{Problem}
\theoremstyle{definition}
\numberwithin{equation}{section}
\renewcommand{\phi}{\varphi}
\newcommand{\C}{\mathrm{C}}
\newcommand{\Z}{\mathrm{Z}}
\newcommand{\pcore}{\mathrm{O}}
\newcommand{\CC}{\mathbb{C}}
\newcommand{\Aut}{\mathrm{Aut}}
\newcommand{\Out}{\mathrm{Out}}
\newcommand{\GL}{\mathrm{GL}}
\newcommand{\Irr}{\operatorname{Irr}}
\newcommand{\Ker}{\operatorname{Ker}}
\newcommand{\TT}{\mathrm{t}}
\newcommand{\ii}{\mathrm{i}}
\title{Character table sudokus}
\author{Benjamin Sambale\footnote{Institut für Algebra, Zahlentheorie und Diskrete Mathematik, Leibniz Universität Hannover, Welfengarten 1, 30167 Hannover, Germany,
\href{mailto:sambale@math.uni-hannover.de}{sambale@math.uni-hannover.de}}}
\date{\today}
\begin{document}
\frenchspacing
\maketitle
\begin{abstract}\noindent
It is a fun game to complete a partial character table of a finite group. We show that one can reconstruct a missing row or column from a given table. The proof relies on deep properties of fully ramified characters. Moreover, we extend a classification of groups with a “large” character degree started by Snyder and continued by Durfee and Jensen.
\end{abstract}

\textbf{Keywords:} character table; sudoku; large degree\\
\textbf{AMS classification:} 20C15 

\renewcommand{\sectionautorefname}{Section}
\section{Introduction}

Character tables of finite groups are complex square matrices satisfying a large number of arithmetical properties, most prominently, the orthogonality relations (see \cite{GagolaFormal} for a compilation). Filling in missing values of a partial character table, can therefore be seen as a sudoku-like puzzle. 
A particular challenge presents itself when an entire row or column of a given table is vacant. We show that in both cases one can reconstruct uniquely the missing row or column by using only the given part of the table (i.\,e. without using other properties of the underlying group). 

\begin{Thm}\label{main}
There are no finite groups whose character tables differ by only one row or only one column.
\end{Thm}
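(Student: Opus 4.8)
The plan is to separate the two cases — exceptional column and exceptional row — since only the second is genuinely hard. Write $X=(\chi_i(g_j))$ and $Y=(\psi_i(h_j))$ for the tables of the putative groups $G$ and $H$, matched so that trivial characters and identity classes correspond. This normalization is routine: the identity class is the unique class whose column is strictly positive (any other all‑positive column would be orthogonal to the degree column, forcing a sum of positive numbers to vanish), and the trivial character is the unique all‑ones row; in particular a short argument shows the exceptional line cannot be the trivial row. Say $X$ and $Y$ agree off line $k$.

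For the one-column case (agreement off column $\ell$), the centralizer orders $\sum_i|X_{ij}|^2$ agree for $j\neq\ell$, and since the identity column agrees I get $|G|=|H|$, whence all class sizes agree. Subtracting the row‑orthogonality relations for rows $i\neq i'$ leaves only column $\ell$ and gives $X_{i\ell}\overline{X_{i'\ell}}=Y_{i\ell}\overline{Y_{i'\ell}}$; taking $i'$ to be the trivial row (entry $1$) forces $X_{i\ell}=Y_{i\ell}$ for all $i$, a contradiction. If instead $\ell$ is the identity column, I subtract the orthogonality of column $\ell$ against every other column: the difference vector $\delta_i=X_{i\ell}-Y_{i\ell}$ is orthogonal to all columns $j\neq\ell$, hence a scalar multiple of the degree column $(\chi_i(1))_i$, and evaluating at the trivial character forces the scalar to be $0$. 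Thus the one-column case is entirely elementary.

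For the one-row case the class sizes need not agree, so I would use only column orthogonality, which is weight‑free. With $a_j=\chi_k(g_j)$, $b_j=\psi_k(h_j)$, subtracting the orthogonality of columns $j\neq j'$ kills every term except $i=k$ and yields
\[ a_j\overline{a_{j'}}=b_j\overline{b_{j'}}=-\sum_{i\neq k}X_{ij}\overline{X_{ij'}}\qquad(j\neq j'), \]
so the off-diagonal products are forced by the common rows. Using $a_1=\chi_k(1)>0$ gives $a_j=c\,b_j$ with $c=b_1/a_1>0$ for $j\neq 1$; reinserting this for two indices $\geq 2$ forces $c^2=1$ unless $b_jb_{j'}=0$. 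Hence either $c=1$ and the rows coincide, or $\chi_k$ and $\psi_k$ vanish off the identity together with at most one class $C=g_2^G$ of size $m$. The case of vanishing off the identity alone is impossible, since then $\chi_k(1)^2=|G|=\sum_i\chi_i(1)^2\geq 1+\chi_k(1)^2$. So I am reduced to: $\chi_k$ vanishes outside $\{1\}\cup C$. Orthogonality with the trivial character and with itself gives $\chi_k(g_2)=-\chi_k(1)/m$ and $|G|=\chi_k(1)^2(m+1)/m$; a short computation with the centralizer order of $g_2$ then gives $\sum_{i\neq k}|\chi_i(g_2)|^2=\sum_{i\neq k}\chi_i(1)^2$. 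As $|\chi_i(g_2)|\leq\chi_i(1)$ termwise, equality forces $|\chi_i(g_2)|=\chi_i(1)$ for all $i\neq k$, and comparing with $\sum_{i\neq k}\chi_i(1)\chi_i(g_2)=\chi_k(1)^2/m$ (from $\rho_G(g_2)=0$) and taking real parts forces $\chi_i(g_2)=\chi_i(1)$. Thus $g_2\in\Ker\chi_i$ for every $i\neq k$, so $N:=\bigcap_{i\neq k}\Ker\chi_i$ is normal, $\Irr(G/N)=\{\chi_i:i\neq k\}$, an order count gives $N=\{1\}\cup C$, and $\chi_k$ is the unique irreducible character nontrivial on $N$. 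By Clifford theory $G$ permutes $\Irr(N)\setminus\{1\}$ transitively, and $\chi_k$ is fully ramified over $N$ inside the inertia group $I$ of any nontrivial $\mu\in\Irr(N)$, with ramification index $e$ satisfying $e^2=|I:N|$.

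The main obstacle is the last step. The shared data — the quotient table of $G/N$ and the forced products, in particular $\chi_k(1)\chi_k(g_2)=-\sum_{i\neq k}\chi_i(1)^2$ — are common to $G$ and $H$, but they do \emph{not} pin down the individual degree $\chi_k(1)$: formally there is a one‑parameter family of completions indexed by $e$ (with $\chi_k(g_2)=-e$ and $m=|I:N|^{-1}\cdot$ fixed data varying accordingly). To collapse this family I would invoke the deep structure theory of fully ramified sections: $I/N$ is a group of central type, hence solvable by the Howlett–Isaacs theorem, and the admissible fully ramified data over the fixed quotient $G/N$ determine $e$ uniquely, so that $\chi_k(1)=\psi_k(1)$ and $a=b$. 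This is precisely where the elementary orthogonality bookkeeping is exhausted and the ``deep properties of fully ramified characters'' announced in the abstract become indispensable.
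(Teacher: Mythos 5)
Your treatment of the column case and of the first two thirds of the row case is sound and runs essentially parallel to the paper: the paper phrases everything as reconstructing a missing line from a partial table rather than comparing two tables, and in the row case it works with the sums $\gamma_{st}=\sum_{i\ne k}c_{is}\overline{c_{it}}$ and the second orthogonality relation rather than with your ratio $c=b_1/a_1$, but the dichotomy you reach --- either the missing rows coincide, or they vanish outside $\{1\}\cup g_2^G$, whence $N=\{1\}\cup g_2^G$ is a minimal normal elementary abelian $p$-group over which the missing character is fully ramified in a stabilizer $G_\lambda$ with $e^2=|G_\lambda:N|$ --- is exactly the paper's Case~1/Case~2 split. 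Two smaller caveats: your normalizations are not as routine as claimed. In the column case the trivial rows of $X$ and $Y$ need not correspond under the given matching (this is the paper's ``two rows $(1,\ldots,1)$'' configuration, coming from a linear character with kernel of index $2$); one can repair this by swapping the two offending rows of $Y$, but that step should be made explicit. In the row case the identity columns need not correspond either, and the failure of that normalization is precisely what puts you in the Gagola situation, so it must be folded into the case analysis rather than dismissed up front.

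The genuine gap is the last step. Having reduced to the fully ramified configuration, you correctly observe that the visible data (the character table of $G/N$ together with the duplicated column) leaves a family of completions indexed by $e$, with $\chi_k(g_2)=-e$ and $|N|=1+|G/N|/e^2$; but you then assert without proof that ``the admissible fully ramified data over the fixed quotient $G/N$ determine $e$ uniquely.'' That assertion \emph{is} the theorem at this point, and the Howlett--Isaacs solvability of groups of central type neither implies it nor is used in the paper. What the paper actually does is: (a) show $G_\lambda/N$ is a Sylow $p$-subgroup of $G/N$, so determining $e$ amounts to determining $p$ (or that $p\nmid |G/N|$); (b) if $e=1$, deduce $N=\C_G(g_2)$, so $G$ is a Frobenius group with complement $G/N$ and all Sylow subgroups of $G/N$ are cyclic or quaternion; (c) if $e>1$, invoke Gagola's inequality $|G/N|_{p'}<|N|<|G/N|_p$ (Isaacs, \cite[Theorem~5.1]{IsaacsFR}), which identifies $p$ as the unique prime with $|G/N|_p>|G/N|_{p'}$; and (d) arbitrate between these branches, and between $p=2$ and $p\nmid|G/N|$, by reading off from the table of $G/N$ whether its odd Sylow subgroups are cyclic (\cite[Corollary~3.12]{Navarro2}) and the size of $|P/N:(P/N)'|$ for a Sylow $2$-subgroup $P/N$ (\cite[Theorem~A]{NaSaTi}), using also that a cyclic $G_\lambda/N$ would let $\lambda$ extend, contradicting full ramification. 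None of this is recoverable from solvability of $G_\lambda/N$, so as written your argument does not close.
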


As usual we consider character tables as identical if they only differ by permuting rows and columns. 
Our proof of \autoref{main} provides an explicit algorithm and we challenge the reader with some examples. 
Playing this game further will sooner or later lead to so-called \emph{pseudo groups} introduced by Brauer~\cite{BrauerPseudo} and investigated further by Harris~\cite{HarrisPseudo} and Gagola~\cite{GagolaFormal} (a concrete example is given in the next section). In his Problem~6, Brauer~\cite{BrauerLectures} has even asked to give necessary and sufficient conditions distinguishing character tables from arbitrary matrices.
We put forward the following open problem.

\begin{Prob}
Do there exist distinct character tables which differ by at most one entry in every row \textup(or in every column\textup)?
\end{Prob}

In the course of the paper we need to revisit groups $G$ with characters of “large” degree (compared to $|G|$). Such groups were studied and classified by Snyder~\cite{Snyder} and Durfee--Jensen~\cite{DurfeeJensen}. In the last section we use the opportunity to extend their classification. 

\section{Proof of \autoref{main}}

We split up \autoref{main} and start with the easier case of a missing column.

\begin{Thm}\label{thmcols}
There are no finite groups whose character tables differ by only one column.
\end{Thm}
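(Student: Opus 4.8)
The plan is to argue directly with two groups. Suppose $G$ and $H$ are finite groups whose character tables agree in every column but one; after matching up rows and columns using the shared part, write $X=(\chi_i(g_j))$ and $X'=(\psi_i(h_j))$ with $\chi_i(g_j)=\psi_i(h_j)$ for all $i$ and all $j\neq k$. From the shared columns I first recover all combinatorial data: the column norms give $|\C_G(g_j)|=|\C_H(h_j)|$ for $j\neq k$, the identity $\sum_j|\C_G(g_j)|^{-1}=1$ (and its analogue for $H$) then forces $|\C_G(g_k)|=|\C_H(h_k)|$, and $|G|=|H|$ follows as well. A column equal to the degree column must be the identity column, since any element on which every irreducible character attains its degree lies in every kernel and is therefore trivial; hence the identity class sits in the same position for both groups. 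If the missing column is the identity one, the degrees are the unknowns; column orthogonality against the known columns places the degree vector in the one-dimensional orthogonal complement of the shared columns, row orthogonality fixes each $|\chi_i(1)|$, and positivity of degrees pins down the remaining real scalar uniquely. So I may assume $g_k\neq 1$.

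The heart of the argument is linear algebra. Since the full table is nonsingular, the $k-1$ shared columns are linearly independent and span a subspace of codimension one; by column orthogonality both unknown columns $v=(\chi_i(g_k))_i$ and $v'=(\psi_i(h_k))_i$ are Hermitian-orthogonal to all of them, hence lie in the same one-dimensional complement. Thus $v'=\mu v$ for a scalar $\mu$. Row orthogonality, applied one row at a time, gives $|\chi_i(g_k)|^2=|\C_G(g_k)|\bigl(1-\sum_{j\neq k}|\chi_i(g_j)|^2|\C_G(g_j)|^{-1}\bigr)$, whose right-hand side is shared; comparing with the analogous expression for $H$ yields $|\chi_i(g_k)|=|\psi_i(h_k)|$ for every $i$, so $|\mu|=1$. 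The two columns therefore differ only by a global phase.

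To remove the phase I use complex conjugation, which sends the column of $g_k$ to the column of $g_k^{-1}$. Matching each shared column with the conjugate of another column partitions the shared columns into conjugate pairs and self-conjugate singletons; since conjugation is an involution on all $k$ columns, either some shared column's conjugate is absent from the shared part, in which case the inverse class of $g_k$ is that shared class and $v$ is literally the entrywise conjugate of a known column, determined outright and equal for $G$ and $H$, or every shared column is accounted for, forcing class $k$ to be real. This dichotomy is read off from the shared data, hence is the same for both groups. In the real case all entries of $v$ and $v'$ are real, so $\mu=\pm1$.

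It remains to settle the sign, which I expect to be the main obstacle. The trivial character is the unique row that is identically $1$ on the shared columns and has value $1$ in the missing column, so whenever it is the only all-ones row on the shared part we get $\mu=1$ and $v'=v$. The single obstruction is a nontrivial linear character $\nu$ that is also $1$ on every shared class; a short divisibility argument shows this forces $\ker\nu$ to have index $2$ with $G\setminus\ker\nu$ equal to the single class of $g_k$, whence $\nu(g_k)=-1$. But twisting by $\nu$, i.e.\ $\chi\mapsto\chi\nu$, fixes every column except column $k$, where it multiplies by $\nu(g_k)=-1$, and permutes $\Irr(G)$; hence negating the missing column reproduces the very same table up to a permutation of its rows. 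Therefore the choice $\mu=-1$ yields a table identical to that of $G$, and in every case $G$ and $H$ have the same character table, so no two distinct tables can differ in a single column.
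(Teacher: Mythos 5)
Your proof is correct and follows essentially the same route as the paper: the missing column is pinned down up to a scalar multiple as a spanning vector of the orthogonal complement of the shared columns, and the scalar is resolved by counting the all-ones rows of the partial table. The only genuine difference is the finish in the ambiguous case with two all-ones rows: where the paper computes the missing column explicitly as $(1,-1,0,\dots,0)^{\TT}$ from $|\C_G(g)|=2$ and column orthogonality, you observe that tensoring with the order-two linear character realizes the sign flip as a row permutation of the same table --- a clean alternative that incidentally makes your intermediate steps establishing $|\mu|=1$ and the reality of the column redundant.
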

\begin{proof}
Let $C=(c_{ij})\in\CC^{k\times (k-1)}$ be the partial character table of a finite group $G$, where the column $d$ corresponding to $g\in G$ is missing.
We need to show that $d$ is uniquely determined by $C$. Since character tables are invertible, the columns of $C$ span a vector space of dimension $k-1$. 
By the second orthogonality relation, $d$ spans the orthogonal complement of this space. In particular, $d$ is uniquely determined up to a scalar multiple. If $C$ has only one row of the form $(1,\ldots,1)$, then this row must correspond to the trivial character. In this case $d$ is uniquely determined. 

Now assume that $C$ has two rows $(1,\ldots,1)$. Then there exists a non-trivial character $\chi\in\Irr(G)$ such that $\chi(h)=1$ for all $h$ outside the conjugacy class of $g$. Let $K:=\Ker(\chi)<G$. Then $G\setminus K$ is the conjugacy class of $g$. It follows that $|G:K|=2$ and $|\C_G(g)|=2$. Moreover, $\chi(g)=-1$. Since $\chi(1)^2+\chi(g)^2=2=|\C_G(g)|$, the orthogonality relation implies that $\psi(g)=0$ for all $\psi\in\Irr(G)\setminus\{1_G,\chi\}$. Hence, $d=(1,-1,0,\ldots,0)^\TT$ up to permutation of rows. 
\end{proof}

Before we embark with the corresponding theorem for rows, we illustrate why it must lie deeper. The following matrices differ only by the their last row:
\[
\begin{pmatrix}
1&1&1&1&1&1\\
1&1&1&1&-1&-1\\
1&1&1&-1&1&-1\\
1&1&1&-1&-1&1\\
2&2&-2&0&0&0\\
8&-1&0&0&0&0
\end{pmatrix}\qquad
\begin{pmatrix}
1&1&1&1&1&1\\
1&1&1&1&-1&-1\\
1&1&1&-1&1&-1\\
1&1&1&-1&-1&1\\
2&2&-2&0&0&0\\
4&-2&0&0&0&0
\end{pmatrix}
\]
The first matrix is the character table of the sharply $2$-transitive Mathieu group $M_9\cong C_3^2\rtimes Q_8$. The second fulfills the orthogonality relations and looks like the character table of a group of the form $C_3\rtimes Q_8$ or $C_3\rtimes D_8$ (cf. \cite[Definition~2.2, Theorem~2.3]{GagolaFormal}). However, using Clifford theory one can show that there is no group with this character table (the details become apparent below). This gives rise to a pseudo group mentioned in the introduction.

Nevertheless, the proof of the following “row theorem” features some duality to \autoref{thmcols}. 

\begin{Thm}\label{thmrows}
There are no finite groups whose character tables differ by only one row.
\end{Thm}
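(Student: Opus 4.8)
The plan is to mimic the duality with \autoref{thmcols}: there the missing column was pinned down up to a scalar by the second orthogonality relation, and the scalar was fixed afterwards. Here I would first reduce the \emph{whole} missing row to a single unknown scalar, namely the missing degree, and then fix that scalar. Let $\chi\in\Irr(G)$ be the character whose row is missing, let $g_1=1,g_2,\dots,g_k$ represent the conjugacy classes, and write $\chi_1,\dots,\chi_{k-1}$ for the known characters. Applying the second orthogonality relation to the identity column and the column of $g_j$ gives $\sum_{i<k}\chi_i(1)\overline{\chi_i(g_j)}+\chi(1)\overline{\chi(g_j)}=0$, so with the known quantity $a_j:=\sum_{i<k}\chi_i(1)\overline{\chi_i(g_j)}$ we obtain $\chi(g_j)=-\overline{a_j}/\chi(1)$. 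Thus the entire missing row is a function of the single unknown $\chi(1)$, dual to the column case, and $|G|=\sum_i\chi_i(1)^2=a_1+\chi(1)^2$ with $a_1=\sum_{i<k}\chi_i(1)^2$.

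Next I would determine $\chi(1)$. Applying the second orthogonality relation to two non-identity columns $g_j,g_l$ gives $c_{jl}+\chi(g_j)\overline{\chi(g_l)}=0$, where $c_{jl}:=\sum_{i<k}\chi_i(g_j)\overline{\chi_i(g_l)}$ is known; substituting the formula above yields $c_{jl}+\overline{a_j}a_l/\chi(1)^2=0$. Hence if some pair $j\ne l$ (both $\ne 1$) has $c_{jl}\ne 0$, then $\chi(1)^2=-\overline{a_j}a_l/c_{jl}$ is determined by the known table, and so is $\chi(1)>0$; the missing row is then unique. It remains to treat the degenerate case $c_{jl}=0$ for all such pairs. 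Then $\overline{a_j}a_l=0$ throughout, so at most one $a_j$ with $j\ge 2$ is nonzero; all cannot vanish (that would force $|G|=\chi(1)^2$, contradicting $a_1\ge 1$), so exactly one, say $a_m$, is nonzero. By the displayed formula $\chi$ then vanishes on $G\setminus(\{1\}\cup C)$, where $C=g_m^G$, and $\chi$ is necessarily nonlinear.

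This is where the problem must lie deeper, and where I would invoke the theory of fully ramified characters through Gagola's classification of nonlinear irreducible characters vanishing on all but two conjugacy classes. That classification identifies $N:=\{1\}\cup C$ as a unique minimal normal subgroup, elementary abelian, with $N\setminus\{1\}=C$ a single class; it shows every $\chi_i$ $(i<k)$ is inflated from $G/N$, so that $\{\chi_1,\dots,\chi_{k-1}\}=\Irr(G/N)$ and $|G/N|=a_1$; and it produces a ramification index $e$ with $\chi_N=e\sum_{1\ne\lambda\in\Irr(N)}\lambda$. Combining $\chi(1)=e(|N|-1)$ with $|G|=|N|\,|G/N|=|N|a_1$ and $|G|=a_1+\chi(1)^2$ forces $a_1=e^2(|N|-1)$, hence $|N|=1+a_1/e^2$ and $\chi(1)=a_1/e$.

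I expect the \textbf{main obstacle} to be the final step: showing that $e$ is uniquely determined. One checks that all orthogonality relations hold identically in $e$, so no further linear-algebra constraint survives and genuine group theory becomes unavoidable. Concretely $G/\C_G(N)$ is a transitive linear group on $N\cong\FF_p^f$ (where $|N|=p^f$) whose point stabiliser has order $e^2$ and, through the inertia group, is of central type; the extreme cases $e=1$ (where $\chi$ is induced and $G$ is sharply $2$-transitive) and $e>1$ (where $\chi$ is fully ramified over a section of central type) impose incompatible structural constraints. Ruling out two genuine groups with different $e$ but the same quotient $G/N$ — the phenomenon already visible in the pseudo-group $C_3\rtimes Q_8$, whose spurious value of $e$ yields a table satisfying all orthogonality relations yet admitting no group — is exactly the deep part, and I would settle it using fully ramified character theory together with the classification of transitive linear groups.
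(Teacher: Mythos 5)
Your reduction is exactly the paper's: the second orthogonality relation expresses the missing row as a function of the single unknown $d=\chi(1)$, the generic case where some cross-product $\gamma_{st}\ne 0$ ($1<s<t$) determines $d^2$ outright, and the degenerate case lands in Gagola's situation of a character vanishing on all but two classes, with $N=g^G\cup\{1\}$ a minimal normal elementary abelian $p$-group, $\lambda\in\Irr(N)$ fully ramified in $G_\lambda$, and $\chi$ determined by the ramification index $e$ (equivalently by $p$, since $e^2=|G/N|_p$ and $d=|G/N|/e$). Up to this point the proposal is correct and faithful to the paper.

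However, there is a genuine gap precisely at the step you yourself flag as the main obstacle: you never actually prove that $e$ is determined by the visible part of the table, you only announce that fully ramified character theory and the classification of transitive linear groups "would settle it". That is the entire content of the hard half of the theorem, as the $M_9$ versus $C_3\rtimes Q_8$ example shows. The paper resolves it by showing that $p$ is readable from the character table of $G/N$ (which is obtained from $C$ by deleting the duplicated column): if $G/N$ has a non-cyclic odd Sylow subgroup, then $G$ cannot be a Frobenius group with kernel $N$, so $e>1$ and Gagola's inequality $|G/N|_{p'}<|N|<|G/N|_p$ singles out $p$ as the unique prime with $|G/N|_p>|G/N|_{p'}$; otherwise all odd Sylow subgroups of $G/N$ are cyclic, one only needs to distinguish $p=2$ from $|G/N|_p=1$, and this is done via the invariant $|P/N:(P/N)'|$ of a Sylow $2$-subgroup, which is detectable from the character table by \cite[Theorem~A]{NaSaTi}, together with the Frobenius-complement and extendibility arguments of Cases 2.2 and 2.3. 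None of this is present in your sketch, and your proposed substitute is both incomplete and strategically different in a costly way: the classification of transitive linear groups rests on the CFSG (and on its own does not address the actual issue, namely that two groups with \emph{different} $e$ might share the same partial table $C$), whereas the paper's argument is deliberately CFSG-free. As written, the proposal establishes Case 1 and the setup of Case 2 but does not prove the theorem.
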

\begin{proof}
Let $C=(c_{ij})\in\CC^{(k-1)\times k}$ be the partial character table of $G$, where the row corresponds to $\chi\in\Irr(G)$ is missing. It suffices to show that $\chi$ is uniquely determined by $C$. 
We may assume that $\chi$ is real, since otherwise $\chi$ is complex conjugate to a given character. Moreover, we can assume that $\chi$ is not the trivial character. In particular, $G\ne 1$.
We first identity which column of $C$ corresponds to the trivial element. This must be a column filled with positive integers. Using that $|\psi(g)|\le\psi(1)$ for every $g\in G$ and $\psi\in\Irr(G)$, the trivial element corresponds to an integral column with “maximal” entries. If there are more than one such identical columns (see Case~2 below), we pick one of them and assign it to the trivial element.

Let $1=g_1,\ldots,g_k\in G$ be representatives for the conjugacy classes of $G$. Let $d:=\chi(1)$ and \[\gamma_{st}:=\sum_{i=1}^{k-1}c_{is}\overline{c_{it}}\qquad (1\le s,t\le k).\] 
By the second orthogonality relation, $\chi(g_s)=-\frac{1}{d}\gamma_{1s}$ for $s=2,\ldots,k$. Hence, $\chi$ is uniquely determined by $d$. 

\textbf{Case~1:} There exist $1<s<t$ such that $\gamma_{st}\ne 0$.\\
Since $\chi(g_s)\chi(g_t)=-\gamma_{st}$, we have $\chi(g_s)\ne 0\ne\chi(g_t)$. It follows that
\[d^2=\frac{\gamma_{1s}\gamma_{1t}}{\chi(g_s)\chi(g_t)}=-\frac{\gamma_{1s}\gamma_{1t}}{\gamma_{st}},\]
and $d$ is uniquely determined by $C$ (note that $d>0$). 

\textbf{Case~2:} $\gamma_{st}=0$ for all $1<s<t$.\\
Since $\chi$ cannot vanish identically on $G\setminus\{1\}$ (otherwise $[\chi,1_G]\ne 0$), there exists $r>1$ such that $\chi(1)\chi(g_r)=\gamma_{1r}\ne 0$. Without loss of generality, let $r=2$. Then $\gamma_{2s}=0$ implies $\chi(g_s)=0$ for $s=3,\ldots,k$. This situation was studied by Gagola~\cite{Gagola}. We repeat some of his arguments for the convenience of the reader. 
For every $\psi\in\Irr(G)\setminus\{\chi\}$ we have
\[0=[\psi(1)1_G-\psi,\chi]=\frac{1}{|G|}\bigl(\psi(1)-\psi(g_2)\bigr)\chi(g_2)\]
since $\chi$ is non-trivial. This yields $\psi(g_2)=\psi(1)$, i.\,e. the first two columns of $C$ are identical.
Moreover, $g:=g_2$ is contained in the kernel of every $\psi\in\Irr(G)\setminus\{\chi\}$. 
Since $\gamma_{1s}=0$ for $s\ge 3$, none of the other columns of $C$ is identical to the first column.
This shows that
\[N:=g^G\cup\{1\}=\bigcap_{\psi\in\Irr(G)\setminus\{\chi\}}\Ker(\psi)\unlhd G\]
is a minimal normal subgroup of $G$. Since all non-trivial elements of $N$ are conjugate, $N$ must be an elementary abelian $p$-group.  By Clifford theory, $\chi$ is the only irreducible character of $G$ lying over some $\lambda\in\Irr(N)\setminus\{1_N\}$ and $\lambda$ is $G$-conjugate to all non-trivial characters of $N$. It follows that $\lambda$ is fully ramified in its stabilizer $G_\lambda$ and \[|G:G_\lambda|=|\Irr(N)|-1=|N|-1.\] 
Moreover, $\chi=\frac{1}{e}\lambda^G$ where $e$ is the ramification index of $\lambda$ (see \cite[Lemma~8.2]{Navarro2}). 
Note that $|G/N|=\sum_{\psi\ne\chi}\psi(1)^2=\gamma_{11}$ is determined by $C$. 
For a prime $q\ne p$, $\chi$ vanishes on the $q$-singular elements. This means that $\chi$ has $q$-defect $0$ and 
\[d_{p'}=|G|_{p'}=|G/N|_{p'}\]
(see \cite[Corollary~4.7]{Navarro2}). 
From $|G|=\gamma_{11}+d^2=|G/N|+d^2$ we also get $|N|=1+\frac{d^2}{|G/N|}$ and 
\[d_p^2=|G/N|_p.\] 
We have thus shown that $\chi$ is determined by $p$ alone. Notice further that 
\[|G_\lambda/N|=e^2=\Bigl(\frac{\lambda^G(1)}{\chi(1)}\Bigr)^2=\frac{|G/N|^2}{d^2}=|G/N|_p,\]
i.\,e. $G_\lambda/N$ is a Sylow $p$-subgroup of $G/N$.

Removing the second column of $C$ reveals the character table of $G/N$.
The following property can be read off from this character table (see \cite[Corollary~3.12]{Navarro2}).

\textbf{Case~2.1:} $G/N$ has a non-cyclic Sylow $q$-subgroup of some odd prime $q$.\\
Suppose that $G_\lambda=N$. Then 
\[|G:N|=|G:G_\lambda|=|N|-1=|G:\C_G(g)|\]
yields $N=\C_G(g)$. Consequently, $G$ is a Frobenius group with kernel $N$ and complement isomorphic to $G/N$ (see \cite[Theorem~6.7]{IsaacsGroup}). However, it is well-known that the Sylow subgroups of a Frobenius complement are cyclic or quaternion groups (see \cite[Theorem~6.11]{IsaacsGroup}). This contradiction shows that $N<G_\lambda$ and $e>1$. 
A theorem attributed to Gagola and presented with an elementary (but long) proof by Isaacs~\cite[Theorem~5.1]{IsaacsFR} states that 
\[|G/N|_{p'}<|N|<|G_\lambda:N|=|G/N|_p.\]
For every prime $q\ne p$, we have $|G/N|_q\le|G/N|_{p'}<|G/N|_p\le|G/N|_{q'}$. Hence, $p$ is uniquely determined by $\gamma_{11}=|G/N|$.

For the remainder of the proof we assume that all Sylow subgroups of $G/N$ of odd order are cyclic. If $|G_\lambda/N|=|G/N|_p\ne 1$, then $G_\lambda/N$ cannot be cyclic, as otherwise $\lambda$ would extend to $G_\lambda$. Hence, it suffices to distinguish $p=2$ from $|G/N|_p=1$. In the latter, case $d=|G/N|$ is independent of $p$. If a Sylow $2$-subgroup $P/N$ of $G/N$ is cyclic, then clearly $|G/N|_p=1$. Thus, we assume that $P/N$ is not cyclic. The next case can also be read off from the character table of $G/N$ by \cite[Theorem~A]{NaSaTi}. 

\textbf{Case~2.2:} $|P/N:(P/N)'|>4$.\\
Assuming $G_\lambda=N$, we end up with a Frobenius group as in Case~2.1. But then $P/N$ must be a quaternion group with $|P/N:(P/N)'|=4$. This contradiction shows that $p=2$.

\textbf{Case~2.3:} $|P/N:(P/N)'|=4$.\\
Here, $P/N$ has a cyclic subgroup $Q/N$ of index $2$. 
If $|P/N|=4$, then again $G/N$ cannot be isomorphic to a Frobenius complement and we have $p=2$. Hence, let $|P/N|\ge 8$. 
If $p=2$, then we may assume that $G_\lambda=P$ by Sylow's theorem. Now $\lambda$ extends to $Q$. This implies $e=2$ and $|P/N|_2=e^2=4$, against our assumption. Therefore, $p\ne 2$, $|G/N|_p=1$ and $d=|G/N|$. This completes the proof.
\end{proof}

The two abelian groups of order $4$ show that character tables can differ by only two columns, two rows or by just $k$ entries, where $k$ is the total number of characters. It might be possible to reconstruct a row and a column of a partial character table simultaneously, but this seems to require an analysis of characters vanishing on all but three conjugacy classes. For instance, the reader may try to decide which of the following matrices are character tables (here $\ii=\sqrt{-1}$):
\[
\begin{pmatrix}
1&1&1&1&1&1\\
1&1&-1&-1&1&1\\
1&-1&\ii&-\ii&1&1\\
1&-1&-\ii&\ii&1&1\\
4&0&0&0&-2&0\\
2&0&0&0&2&-2
\end{pmatrix}
\quad
\begin{pmatrix}
1&1&1&1&1&1\\
1&1&-1&-1&1&1\\
1&-1&\ii&-\ii&1&1\\
1&-1&-\ii&\ii&1&1\\
4&0&0&0&-2&1\\
4&0&0&0&1&-2
\end{pmatrix}
\quad
\begin{pmatrix}
1&1&1&1&1&1\\
1&1&-1&-1&1&1\\
1&-1&\ii&-\ii&1&1\\
1&-1&-\ii&\ii&1&1\\
4&0&0&0&-3&1\\
8&0&0&0&1&-1
\end{pmatrix}
\]

The proofs of our theorems in combination with \cite[Corollay~3.12]{Navarro2} and \cite[Theorem~A]{NaSaTi} provide a practical algorithm to complete a partial character table. 
We challenge the reader to add three rows and three columns to turn the following matrix into a character table of size $11\times 11$. There is only one way to do this, but two non-isomorphic groups share this character table:

\[
\begin{pmatrix}
1&1&1&1&1&1&1&1\\
1&1&1&1&1&1&-1&-1\\
1&-1&-1&1&1&-1&\ii&\ii\\
2&-2&-2&2&-1&1&0&0\\
2&2&2&2&-1&-1&0&0\\
3&3&-1&-1&0&0&-1&1\\
3&3&-1&-1&0&0&1&-1\\
3&-3&1&-1&0&0&\ii&-\ii
\end{pmatrix}
\]

\section{Large character degrees}
The reader might have noticed that the difficulties in our proofs arise from groups with “large” character degrees. 
Gagola's bound $|N|<|G/N|_p$, used in \autoref{thmrows}, has been improved in \cite[Theorem~1.2]{HLS} as follows: 

\begin{Thm}
Let $G$ be a group of order $d(d+e)$ where $d$ is the degree of an irreducible character and $e>1$ is an integer. Then $|G|\le e^4-e^3$.
\end{Thm}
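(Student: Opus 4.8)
The plan is to prove the equivalent statement $\chi(1)\le e^2-e$. Writing $d:=\chi(1)$, the map $s\mapsto s(s+e)$ is increasing and $(e^2-e)\bigl((e^2-e)+e\bigr)=e^4-e^3$, so $d\le e^2-e$ gives exactly $|G|\le e^4-e^3$. First I would dispose of the range $d\le e^2-e$, which is immediate, and assume from now on that $d>e^2-e\ge e$ (the inequality $e^2-e\ge e$ holds as $e\ge2$). From $d>e$ and the standard estimate $d^2=\chi(1)^2\le|G:\Z(G)|=d(d+e)/|\Z(G)|$ one gets $|\Z(G)|\le1+e/d<2$, so $\Z(G)=1$. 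Next I would show that $\chi$ is faithful: if a minimal normal subgroup $N$ lay in $\Ker(\chi)$, then $\chi$ would be inflated from $G/N$, where $m:=(d+e)/|N|$ is a positive integer with $m\ge d$ (as $d^2\le|G/N|=dm$); yet $m-d=(d+e-d|N|)/|N|\le(e-d)/|N|<0$ because $|N|\ge2$ and $d>e$, a contradiction. Hence $\Ker(\chi)=1$.

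With $\chi$ faithful I would fix a minimal normal subgroup $N$ and argue as in Case~2 of \autoref{thmrows}. Suppose first that $N$ is abelian, hence an elementary abelian $p$-group; then $\chi$ lies over a nontrivial $G$-orbit of some $\lambda\in\Irr(N)$, and with $t:=|G:G_\lambda|$ the orbit length and $e_0$ the ramification index of $\lambda$ in $G_\lambda$ one has $d=e_0t$ and $|G_\lambda|=e_0(d+e)$. The full-ramification inequality $e_0^2\le|G_\lambda:N|$ then becomes $e_0|N|\le d+e$, with equality exactly in the fully ramified case where $\chi$ is the unique irreducible constituent of $\lambda^G$; in particular $e_0\le e$. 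Decomposing $de=\sum_{\psi\ne\chi}\psi(1)^2$ along the $G$-orbits on $\Irr(N)$, the constituents inflated from $G/N$ already contribute $|G/N|=d(d+e)/|N|\le de$, which yields the clean bound $d\le e(|N|-1)$. This settles everything when $|N|\le e$, so the real content is to control chief factors with $|N|>e$, and it suffices to bound $|N|$ by a quantity of order $e^2$.

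The crux, and the step I expect to be hardest, is this bound on $|N|$. Here $e$ is governed by the non-principal orbits of $\overline G:=G/\C_G(N)\le\GL(N)$ on $N\cong\Irr(N)$, whose sizes sum to $|N|-1-t$. Two borderline configurations arise. When $\overline G$ acts fixed-point-freely (a Frobenius complement), all orbits are regular, so $d\mid|N|-1$; since $e>1$ rules out transitivity there are at least two nontrivial orbits, forcing $d\le(|N|-1)/2$ and hence $e=|N|-d\ge(|N|+1)/2$, which gives $|N|\le2e-1\le e^2$ at once. The genuinely hard configuration is the opposite extreme, in which the orbit of $\lambda$ is all of $\Irr(N)\setminus\{1\}$ and $\lambda$ is fully ramified: then $\overline G$ is a transitive linear group, $e=e_0$, and the target $d=e_0(|N|-1)\le e^2-e$ demands $|N|\le e_0$, far stronger than Gagola's bound $|N|<|G/N|_p=e_0^2$ quoted right after this theorem. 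Closing this gap is the heart of the matter: I would feed Isaacs' fully ramified estimates (as invoked in \autoref{thmrows}) into the classification of transitive linear groups (Hering's theorem), whose short list of possibilities for $\overline G$ is so restrictive that the residual configurations with $e<|N|$ cannot occur, leaving $|N|\le e_0$. The intermediate orbit patterns are then handled by interpolating between these two extremes.

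Finally I would dispose of a nonabelian minimal normal subgroup $N=S_1\times\cdots\times S_\ell$. Now the constituent $\theta\in\Irr(N)$ below $\chi$ has $\theta(1)>1$, the estimate reads $e_0|N|\le\theta(1)(d+e)$, and $d=e_0t\,\theta(1)$; applying the orbit analysis of the previous paragraph to $\Irr(N)$, together with the fact that a product of nonabelian simple groups has comparatively few characters of small degree, should keep $d$ below the critical range $d>e^2-e$, so that the hard case forces $N$ abelian. Combining the cases gives $d\le e^2-e$, and therefore $|G|=d(d+e)\le(e^2-e)e^2=e^4-e^3$, as desired.
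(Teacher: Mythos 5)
A point of order first: the paper does not prove this theorem — it is quoted verbatim from \cite[Theorem~1.2]{HLS}, whose proof is a full-length argument depending on the classification of the finite simple groups — so your proposal must be judged on its own. Your preliminary reductions are correct: the equivalence with $d\le e^2-e$, the triviality of $\Z(G)$, the faithfulness of $\chi$, and the inequality $d\le e(|N|-1)$ obtained from $|G/N|\le\sum_{\psi\ne\chi}\psi(1)^2=de$ are all sound, and you correctly identify the transitive, fully ramified configuration as the heart of the matter.

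Precisely there, however, the proposal stops being a proof. Two concrete problems. First, your intermediate claim that ``it suffices to bound $|N|$ by a quantity of order $e^2$'' is false: in the transitive fully ramified case you yourself compute $d=e(|N|-1)$, so $|N|\approx e^2$ would give $d\approx e^3$, far above $e^2-e$; what is actually needed is $|N|\le e=e_0$, i.\,e. an improvement of Gagola's bound $|N|<e_0^2$ by a full factor of $e_0$. Second, that improvement is the entire content of the theorem, and you do not supply it: ``feed Isaacs' fully ramified estimates into Hering's theorem, whose short list is so restrictive that the residual configurations cannot occur'' is a research plan, not an argument — one must actually work through Hering's families (which already commits you to the CFSG, consistent with the paper's remark) and bound the ramification index in each, and this is where \cite{HLS} spend most of their effort. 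Likewise ``the intermediate orbit patterns are then handled by interpolating between these two extremes'' carries no content: the orbit structure of $G/\C_G(N)$ on $\Irr(N)\setminus\{1_N\}$ can be arbitrary, and each pattern needs its own estimate (the case of a single regular orbit is also slightly misstated, since regularity gives $d/e_0\mid|N|-1$ rather than $d\mid|N|-1$ unless you first show $e_0=1$). The nonabelian minimal normal subgroup case is dismissed with ``should keep $d$ below the critical range,'' again without proof. In short, the proposal maps the terrain correctly but leaves the essential step — and only the essential step — unproven.
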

 
While \cite{HLS} depends on the classification of the finite simple groups, our proof of \autoref{thmrows} (relying on \cite[Proposition~2.1]{NaSaTi}) is CFSG-free. 
Due to a construction by Isaacs~\cite{IsaacsFR}, the bound $|G|\le e^4-e^3$ is best possible whenever $e$ is a power of a prime.
The authors of \cite{HLS} have asked to classify those groups. 
Building on work of Snyder~\cite{Snyder} for $e=2,3$, Durfee--Jensen~\cite{DurfeeJensen} have classified the groups with $2\le e\le 6$ (there are infinitely many groups for $e=1$). For $e=7$, they could not finish their classification since the groups of order $d(d+e)=42\cdot 49=2958$ are not available in the small groups library. However, these groups can be constructed using the \texttt{GrpConst} package in GAP~\cite{GAPnew} (see also \cite{HornHomepage}). There are just four of them with an irreducible character of degree $d=42$. 
We extend the classification to $e\le 11$. Most group orders can be handled with GAP. The difficult cases, which require special attention, are settled in the following lemmas.

\begin{Lem}
Let $G$ be a group of order $d(d+e)$ with $\chi\in\Irr(G)$ of degree $d$. Then $(d,e)$ is not one of the following pairs:
\begin{multicols}{4}
\begin{enumerate}[(i)]
\item $(32,8)$.
\item $(48,8)$.
\item $(54,9)$.
\item $(55,9)$.
\item $(54,10)$.
\item $(80,10)$.
\item $(64,11)$.
\end{enumerate}
\end{multicols}
\end{Lem}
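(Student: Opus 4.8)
The plan is to assume a group $G$ as in the statement exists, with $\chi\in\Irr(G)$, $\chi(1)=d$ and $|G|=d(d+e)$, and to show that the shape of $\chi$ is rigid enough to force $G$ onto a short, explicitly checkable list. First I would record two reductions that hold because $e<d$ for every listed pair. From $d^2=\chi(1)^2\le|G:\Z(\chi)|$ we get $|\Z(\chi)|\le(d+e)/d<2$, so $\Z(\chi)=1$; since $\Ker(\chi)\le\Z(\chi)$ and $\Z(G)\le\Z(\chi)$, the character $\chi$ is faithful and $\Z(G)=1$. Second, comparing $\sum_{\psi}\psi(1)^2=|G|$ with $\chi(1)^2=d^2$ yields $\sum_{\psi\ne\chi}\psi(1)^2=de$, a small number; in particular every degree $\psi(1)$ with $\psi\ne\chi$ divides $|G|$ and is at most $\sqrt{de}$.

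Next I would take a minimal normal subgroup $N\unlhd G$ and split according to whether $N$ is abelian. If $N$ is nonabelian then $|G|$ is divisible by the order of a nonabelian simple group; inspecting the factorisations $|G|=d(d+e)$ shows this can happen only for $(48,8)$, $(80,10)$ and $(64,11)$, with the simple section forced into $\{\PSL(2,7),A_5,A_6\}$. These I would dispatch by restricting $\chi$ to $N$ (a Clifford constituent has degree dividing $d$) and checking the few extensions of $N$ by a subgroup of $\Out$ against $\sum_{\psi\ne\chi}\psi(1)^2=de$, a computation on a section of order at most a few hundred. In all remaining cases --- and automatically for the solvable orders $1280,3402,3456,3520$, none of which admits a nonabelian simple section --- the subgroup $N$ is an elementary abelian $p$-group.

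For abelian $N$ the crucial observation is that $\chi$ must be imprimitive. Indeed $\chi$ lies over some $\lambda\in\Irr(N)$ with $\lambda\ne1_N$ (faithfulness), and if $\lambda$ were $G$-invariant then $\chi_N=d\lambda$ would force $N\le\Z(\chi)=1$. Hence the inertia group $T:=G_\lambda$ is proper, $\chi=\psi^G$ for some $\psi\in\Irr(T\mid\lambda)$, and $d=|G:T|\,\psi(1)$. Writing $e':=\psi(1)$ and using $\psi_N=e'\lambda$ (so $N\le\Z(\psi)$ and $e'^2\le|T:N|$) together with the orbit bound $|G:T|=d/e'\le|N|-1$, I obtain
\[
\frac{d}{e'}+1\le|N|\le\frac{d+e}{e'},\qquad |N|=p^a\mid (d+e)e',
\]
and in particular $e'\le e$. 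Thus the pair $(e',|N|)$ ranges over a set of size $O(e)$, and for each choice both $|G/N|$ and the faithful action $G/\C_G(N)\hookrightarrow\GL(a,p)$ are pinned down. The monomial subcase $e'=1$ already forces $|N|=d+e$ to be a prime power, which excludes this possibility for $(32,8),(48,8),(54,9),(80,10),(64,11)$; for the survivors $d+e=64$ one is left with an action of a group of order $d$ on $\FF_2^6$ possessing a regular orbit, which is impossible for $(55,9)$ because $11\nmid|\GL(6,2)|$, and is eliminated for $(54,10)$ by a direct search over the resulting tiny family of extensions of $\FF_2^6$.

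The hardest part, and the one re-using the fully ramified character theory behind \autoref{thmrows}, is the subcase $e'>1$ in which $\lambda$ is fully ramified in $T$, i.e.\ $e'^2=|T:N|$ and $|N|=(d+e)/e'$. Here $\lambda$ is $T$-invariant with $N$ abelian, so $N\le\Z(T)$ and $T$ is a central extension of the small group $T/N$ (of order $e'^2\le e^2$) by the cyclic $p$-group $N$. Full ramification requires a nondegenerate commutator pairing $T/N\times T/N\to N$; Gagola's bound $|N|<|T/N|_p$ from \autoref{thmrows}, together with the coprimality between $|N|$ and the primes dividing $T/N$ that is forced by the explicit values of $(d,e)$, shows that no such pairing can exist, contradicting full ramification. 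The non-extremal possibilities $e'^2<|T:N|$ produce additional irreducible characters over $\lambda$, whose degrees further constrain $\sum_{\psi\ne\chi}\psi(1)^2=de$ and leave only finitely many extensions per pair, verified by a short GAP computation on sections of bounded order. I expect the main obstacle to be establishing imprimitivity and the bound $e'\le e$ cleanly; once the inertia data are confined to an $O(e)$-sized set, each pair collapses to arithmetic of prime-power orders and small linear actions.
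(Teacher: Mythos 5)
Your general framework --- reduce to a minimal normal subgroup $N$, use $d=|G:G_\lambda|\,e'$ with $e'^2\le|G_\lambda:N|$ to confine $(e',|N|)$ to a short arithmetic list, and treat the extremal case by fully ramified character theory --- is close in spirit to the paper's opening reductions, and it does dispose of several pairs cleanly (for $(55,9)$ your observation that $11\nmid|\GL(6,2)|$ kills the surviving configuration is a nice alternative to the paper's Frobenius/Thompson argument, and your $A_5$/$A_5^2$ analysis for $(64,11)$ matches the paper's). But the step that is supposed to kill the fully ramified subcases is broken in two ways. First, $N\le\Z(T)$ is false in general: $T=G_\lambda$ fixes $\lambda$ and hence acts trivially on $N/\Ker(\lambda)$, but it may act nontrivially on $\Ker(\lambda)$ when $N$ is elementary abelian of rank $>1$; so $T$ is not a central extension of $T/N$ by a cyclic group, and the nondegenerate-pairing picture only lives on $T/\Ker(\lambda)$. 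Second, the bound $|N|<|G_\lambda:N|_p$ you import from \autoref{thmrows} is proved (in Isaacs' Theorem 5.1) only in the Gagola situation, where the orbit of $\lambda$ is \emph{all} of $\Irr(N)\setminus\{1_N\}$, i.e.\ $|G:G_\lambda|=|N|-1$; nothing in the present lemma forces that, so the bound is not available.

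The concrete casualty is $(54,10)$, i.e.\ $|G|=2^7\cdot 3^3$. Your own arithmetic leaves the subcase $e'=2$, $|N|=2^5$, $|G:T|=27$, $|T|=2^7$, $|T:N|=4=e'^2$: here $\lambda$ is fully ramified in a Sylow $2$-subgroup, $|N|$ and $|T/N|$ are both $2$-powers (so your coprimality/central-type argument says nothing --- a central-type $2$-group with $|\bar T/\Z(\bar T)|=4$ is just $D_8$ or $Q_8$), the orbit of $\lambda$ has size $27\ne 31$ (so Gagola's bound does not apply), and this is the extremal case, not one of the ``non-extremal'' ones you defer to GAP. The paper has to work much harder precisely here: it uses $3$-defect zero to get $\pcore_3(G)=1$, the Hall--Higman lemma to force $\C_G(\pcore_2(G))\le\pcore_2(G)$ and $|\pcore_2(G)|\ge 2^6$, a regular-orbit theorem together with the local structure of $\GL(6,2)$, and finally an explicit enumeration of candidates for $\pcore_2(G)$ of order $2^7$. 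A similar understatement affects your treatment of nonabelian minimal normal subgroups for $(80,10)$: since $\C_G(N)$ can have order up to $|G|/120=60$ and may itself be nonsolvable, the check is not ``a section of order at most a few hundred'' but a genuine case analysis of $\C_G(N)$ (solvable, containing a second $A_5$, etc.), which the paper carries out. Until the fully ramified $2$-local case of $(54,10)$ is actually closed, the proof is incomplete.
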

\begin{proof}
First we recall some general facts from Clifford theory. Let $N\unlhd G$. Let $\theta\in\Irr(N)$ be a constituent of $\chi_N$ with ramification index $e$ and $k:=|G:G_\theta|$, where $G_\theta$ is the stabilizer of $\theta$. Then $\chi(1)=ke\theta(1)$ such that $e\mid|G_\theta:N|$ and $e^2\le|G_\theta:N|$ (see \cite[Theorem~5.12 and the subsequent remark]{Navarro2}). Moreover, $k\theta(1)^2<|N|$ and \[\chi(1)^2\le|G:N|\frac{|N|-1}{\theta(1)^2}\] 
unless $N=1$. If $N$ is abelian, then $\chi(1)=ke\mid|G:N|$ and if $N\le\Z(G)$, then $\chi(1)=e$. 

\begin{enumerate}[(i)]
\item Here $|G|=2^8\cdot 5$. Recall that $N:=\pcore_2(G)$ is the kernel of the transitive action of $G$ on the cosets of a Sylow $2$-subgroup. In particular, $G/N$ is isomorphic to a subgroup of $S_5$. It follows that $|N|\ge 2^6$. But this leads to the contradiction
\[2^5=\chi(1)=ke\theta(1)<|G:N|_2\sqrt{|N|}=\frac{2^8}{\sqrt{|N|}}\le 2^5.\]

\item Here $|G|=2^7\cdot 3\cdot 7$. By the classification of the transitive groups of degree $21$ in GAP, we obtain $\pcore_2(G)\ne 1$. Hence, there exists a minimal normal subgroup $N\le\pcore_2(G)$. Since $N$ is (elementary) abelian and $48=\chi(1)\mid|G:N|$, it follows that $|N|\le 8$. On the other hand, we have $\chi(1)^2\le|G|-|G:N|$, which forces $|N|=8$. Since $k$ divides $\chi(1)=48$ and $k<|N|$, we get $k\le 6$. But now 
\[2^8\cdot 3^2=\chi(1)^2\le|G:N|k=2^5\cdot 3^2\cdot 7,\] 
a contradiction.

\item Here $|G|=2\cdot 3^5\cdot 7$. There exists a normal subgroup of order $3^5\cdot 7$, and by Sylow's theorem $G$ has a normal Sylow $7$-subgroup $N$. Then $\chi(1)^2=2^2\cdot 3^6=|G|-|G:N|$ and therefore $k=6$ and $e=9$. This is impossible since $\theta$ extends to $G_\theta$ by \cite[Corollary~6.2]{Navarro2}.

\item Here $|G|=55\cdot 64=2^6\cdot 5\cdot 11$. Then $\chi$ has $11$-defect $0$ and therefore $\pcore_{11}(G)=1$. By Sylow's theorem, $G$ has $2^6\cdot 5$ Sylow $11$-subgroups. Hence, $G$ is a Frobenius group with kernel $K$ of order $2^6\cdot 5$. By Thompson's theorem on Frobenius kernels, $K$ is nilpotent. Therefore, $K$ and $G$ have a normal Sylow $5$-subgroup. But this contradicts the fact that $\chi$ has $5$-defect $0$. 

\item Here $|G|=2^7\cdot 3^3$. Then $\chi$ has $3$-defect $0$ and therefore $\pcore_3(G)=1$. Let $N:=\pcore_2(G)$. By the Hall--Higman lemma, $\C_G(N)\le N$ (see \cite[Theorem~3.21]{IsaacsGroup}). Let $P$ be a Sylow $3$-subgroup of $G$. 
Since $P$ acts faithfully on $N/\Phi(N)$, we obtain $|N|\ge |N/\Phi(N)|\ge 2^6$. Suppose that $|N|=2^6$. Then $N$ is elementary abelian and $G_\theta=N$. In particular, $P$ has a regular orbit on $\Irr(N)$. By \cite[Corollary~2.12]{Navarro2}, $P$ also has a regular orbit on $N$. Using the local structure of $\GL(6,2)$, one can show with GAP that this is impossible. Therefore, $|N|=2^7$ and $G=N\rtimes P$. We can now determine the candidates for $N$ with the small groups library (there are four such groups with an automorphism group of order divisible by $27$). For each candidate we construct $G$ and check that $\chi$ does not exist. 

\item Here $|G|=2^5\cdot 3^2\cdot 5^2$. Since $\chi$ has $2$-defect $1$, it must lie in a $2$-block of defect $1$. In particular,
$N:=\pcore_2(G)$ has order at most $1$. If $N\ne 1$, then we derive the contradiction 
\[2^8\cdot 5^2=\chi(1)^2\le|G:N|(|N|-1)=2^4\cdot 3^2\cdot 5^2.\]
Hence, $N=1$. In the same way we can show that $\pcore_5(G)=1$.
If $G$ is solvable, we must have $N:=\pcore_3(G)\ne 1$ and $\C_G(N)\le N$ by the Hall--Higman lemma. This cannot happen since $G/\C_G(N)\le\GL(3,2)$ is too small. Thus, $G$ is non-solvable. Let $N$ be a minimal normal subgroup of $G$. 
If $N\cong A_6$, then $\C_G(N)\ne1$ is solvable normal subgroup of $G$ since $N\cap\C_G(N)=\Z(N)=1$ and $G/N\C_G(N)\le\Out(A_6)\cong C_2^2$. This contradicts $\pcore_5(G)=1$. If $N\cong A_5^2$, then $N$ must contain an irreducible character of degree $80$ or $40$, because $|G:N|=2$. This is not the case since $A_5$ has character degrees $1,3,4,5$. Finally, let $N\cong A_5$. 
If $M:=\C_G(N)$ is solvable, we get the contradiction $\C_M(\pcore_3(M))\le M$ as above. If $M$ is non-solvable, then we find another normal subgroup $M\unlhd G$ isomorphic to $A_5^2$. This is impossible as we have just seen.

\item Here $|G|=2^6\cdot 3\cdot 5^2$. Since $\chi$ has $2$-defect $0$, we have $\pcore_2(G)=1$. 
As in the previous case, we can show that $G$ has to be non-solvable. 
Let $N$ be a non-abelian minimal normal subgroup. Then $N\cong A_5$ and $\C_G(N)$ contains an abelian minimal normal subgroup of $G$. This leads to a contradiction as before.
\qedhere
\end{enumerate}
\end{proof}

\begin{Lem}
There are exactly $12$ groups of order $3584$ with an irreducible character of degree $56$.
\end{Lem}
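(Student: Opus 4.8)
The plan is to pin down $\pcore_2(G)$ by a Sylow count, reduce the shape of $\chi$ through Clifford theory, and finish with an explicit GAP construction. First I would collect the arithmetic. As $|G| = 2^9\cdot 7$ has only two prime divisors, $G$ is solvable by Burnside's $p^aq^b$ theorem. Since $\chi(1) = 56 = 2^3\cdot 7$ has $\chi(1)_7 = 7 = |G|_7$, the character $\chi$ has $7$-defect $0$, whence $\pcore_7(G) = 1$ exactly as in parts (iv) and (v) of the previous lemma. As $7$ is the only odd prime divisor of $|G|$, this gives $\pcore_{2'}(G) = 1$, so the Fitting subgroup equals the nontrivial $2$-group $N := \pcore_2(G)$, and the Hall--Higman lemma yields $\C_G(N) \le N$. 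In particular $G/N$ acts faithfully on $N$, so $7 \mid |\Aut(N)|$.

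The decisive step, which I expect to replace a long case analysis, is to compute $|N|$ from the Sylow number $n_2$. Since $n_2 \mid 7$ and $n_2$ is odd, either $n_2 = 1$ or $n_2 = 7$. If $n_2 = 1$, then $N$ is the whole Sylow $2$-subgroup, so $|N| = 512$ and $G/N \cong C_7$. If $n_2 = 7$, then $G$ acts transitively on its seven Sylow $2$-subgroups; as $G$ is solvable, the image in $S_7$ is a transitive solvable group of prime degree $7$, hence permutation-isomorphic to a subgroup of $\mathrm{AGL}(1,7)$ of order $42$. Because $3 \nmid |G|$, this image has order $7$ or $14$, and order $7$ is impossible since it would force a normal Sylow $2$-subgroup. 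Thus the image is $\cong D_{14}$, its kernel $K$ is a normal $2$-group of order $256$, and $K = N$ (we have $N \le K$ since $N$ lies in every Sylow $2$-subgroup, and $K \le N$ since $K$ is a normal $2$-group). Hence either $|N| = 512$ with $G/N \cong C_7$, or $|N| = 256$ with $G/N \cong D_{14}$.

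Next I would feed these data into the Clifford-theoretic relations recalled at the start of the previous proof. Let $\theta$ be a constituent of $\chi_N$ with ramification index $e$ and $k := |G:G_\theta|$, so $56 = \chi(1) = ke\,\theta(1)$ with $\theta(1)$ a power of $2$ and $k \mid |G:N| \in \{7,14\}$. Since $\theta(1)$ is a power of $2$, the factor $7$ of $56$ divides $ke$; but $7 \mid e$ would give $e^2 \ge 49 > |G_\theta:N|$, contradicting $e^2 \le |G_\theta:N| \le 14$, so $7 \mid k$. A short check then forces $e = 1$ throughout, with $(\theta(1),k) = (8,7)$ when $|N| = 512$, and $(\theta(1),k) \in \{(4,14),(8,7)\}$ when $|N| = 256$. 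Consequently $\chi$ is induced from $\theta$ (or, in the last case, from an extension of $\theta$ to the index-$7$ subgroup $G_\theta$), and $\theta$ lies in a $G/N$-orbit of length $k$ on $\Irr(N)$.

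Finally I would carry out the classification on the computer. By the Schur--Zassenhaus theorem the case $|N| = 512$ yields a split extension $G = N \rtimes C_7$, whereas for $|N| = 256$ one must also allow the non-split extensions of $N$ by $D_{14}$, handled by the extension routines of GAP. For $|N| = 256$ I would let $N$ run over the order-$256$ groups in the small groups library with $7 \mid |\Aut(N)|$, build all $G$ with $\pcore_2(G) = N$ and $G/N \cong D_{14}$, and retain those having an irreducible character of degree $56$. The hard part will be $|N| = 512$, since the library of order-$512$ groups is far too large to scan: rather than loop over all $2$-groups I would exploit that a coprime order-$7$ automorphism acts faithfully on $N/\Phi(N)$, forcing a faithful $3$-dimensional $\FF_2C_7$-module there as a summand, generate the admissible $N$ from this module-theoretic constraint, and assemble $G = N \rtimes C_7$ with the \texttt{GrpConst} package~\cite{GAPnew} as was done for order $2958$. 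Filtering both families by the degree-$56$ condition and discarding isomorphic duplicates should leave exactly $12$ groups; keeping the order-$512$ search feasible and the deduplication correct are the remaining obstacles.
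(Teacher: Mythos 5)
Your first half is sound and runs parallel to the paper: the Sylow-counting argument pinning $|N|\in\{2^8,2^9\}$ with $G/N\cong D_{14}$ or $C_7$ is a clean equivalent of the paper's appeal to the transitive subgroups of $S_7$, and your Clifford analysis correctly extracts $e=1$, $k=7$, $\theta(1)=8$ in the case $|N|=2^9$ (the paper rules out $|N|=2^8$ outright by showing that none of the $11$ computationally admissible $N$ has $D_{14}$ inside $\Out(N)$, whereas you keep that case alive; that is extra work but not an error, and your observation that non-split extensions must be considered there is correct since $\gcd(|N|,14)\ne 1$).

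The genuine gap is in the case $|N|=2^9$, which you yourself identify as the hard part but do not actually resolve. The only structural constraint you extract on $N$ is that a coprime automorphism of order $7$ acts faithfully on $N/\Phi(N)$, forcing a faithful $3$-dimensional $\FF_2C_7$-summand there. This is far too weak: it bounds neither $|\Phi(N)|$ nor the rank nor the class of $N$, so it does not cut the roughly $10^7$ groups of order $512$ down to anything enumerable, and \texttt{GrpConst} cannot be fed such a constraint (for order $2058$ it works because the Sylow subgroups are tiny; here the bottleneck \emph{is} the Sylow $2$-subgroup). The paper closes this gap with a further structural analysis that your proposal is missing: since $G_\theta=N$ and $\theta(1)=8$, one has $64=\theta(1)^2\le|N:\Z(N)|$; on the other hand, if $G/N$ acted trivially on $\Z(N)$ the constituent $\lambda\in\Irr(\Z(N))$ would be $G$-invariant, giving the contradiction $56^2\le|G:\Z(N)|\le 56\cdot 32$, so $C_7$ acts faithfully and irreducibly on $\Z(N)$ and $|\Z(N)|=8$ exactly, with $\Z(N)$ a minimal normal subgroup contained in $N'$. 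A computation with immediate descendants (\texttt{AnuPQ}) then shows $N/\Z(N)$ must be elementary abelian of order $2^6$, so $\Z(N)=N'=\Phi(N)$ and $N$ has rank $6$ and $p$-class $2$ --- which localizes $N$ in an explicit, searchable segment of the small groups library and yields the $8+4=12$ groups. Without some reduction of this strength, your enumeration cannot be carried out and the count of $12$ is not established.
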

\begin{proof}
Let $G$ be a group of order $3584=2^9\cdot 7$ with $\chi\in\Irr(G)$ of degree $56$. Let $N:=\pcore_2(G)$. By the classification of transitive subgroups of $S_7$, we have $|N|\ge 2^8$. Suppose first that $|N|=2^8$. Let $M/N=\pcore_7(G/N)$. Then $|M|=2^8\cdot 7$ and $\chi$ lies over some $\theta\in\Irr(M)$ of degree $28$. A GAP computation shows that there are 11 possible isomorphism types for $N$. 
However, in each case $G/N\cong D_{14}$ is not isomorphic to a subgroup of $\Out(N)$. This contradiction shows that $|N|=2^9$ and $G\cong N\rtimes C_7$. 

Now let $\theta\in\Irr(N)$ and $\lambda\in\Irr(\Z(N))$ be constituents of $\chi_N$ and $\chi_{\Z(N)}$ respectively. If $G/N$ acts trivially on $\Z(N)$, then $\lambda$ is $G$-invariant and we derive the contradiction 
\[56^2\le|G:\Z(N)|\le|G|/2=56\cdot 32.\] 
Hence, $G/N$ acts faithfully on $\Z(N)$ and it follows that $|\Z(N)|\ge8$. On the other hand, $64=\theta(1)^2\le|N:\Z(N)|\le 2^6$ implies $|\Z(N)|=8$. 
Since $G/N$ acts irreducibly on $\Z(N)$, $\Z(N)$ is a minimal normal subgroup of $G$. In particular, $\Z(N)$ is elementary abelian and $\Z(N)\subseteq N'$. 
We use GAP to enumerate the groups of order $2^6$ with an automorphism of order $7$. In this way we find just 7 possibilities for $N/\Z(N)$. With the notation of \cite[Definition~2.1]{OBrienAnu}, $N$ is an \emph{immediate descendant} of $N/\Z(N)$ and those can be computed with the \texttt{AnuPQ} package in GAP. It turns out that $N/\Z(N)$ must be elementary abelian. Hence, $\Z(N)=N'=\Phi(N)$. In particular, $N$ has rank $6$ and $p$-class $2$. According to the small groups library, those groups have the form $N=\mathtt{SmallGroup}(2^9,a)$ with $7{,}532{,}393\le a\le 10{,}481{,}221$. Running through these groups with GAP yields the following values for $a$:
\[10475413,\,10476872,\,10477010,\,10477017,\,10481182,\,10481184,\,10481185,\,10481201,\,10481221\]
(we made use of the \texttt{AutPGrp} package to compute $\Aut(N)$).
If $a\ne 10481201$, then $|\Aut(N)|_7=7$ and there is a unique group $G$. If $a=10481201$, then $|\Aut(N)|_7=7^2$ and there are just four non-isomorphic groups $G$. 
\end{proof}

The final theorem summarizes our findings.

\begin{Thm}
Let $k$ be the number of non-isomorphic groups $G$ of order $n=d(d+e)$ where $2\le e\le 11$ and $G$ has an irreducible character of degree $d$. Then $(d,n)^k$ is given in the table below:
\[
\begin{array}{c|l|l}
e&(d,n)^k&\Sigma\\\hline
2&(1,3),(2,8)^2&3\\
3&(1,4)^2,(2,10),(6,54)^2&5\\
4&(1,5),(2,12)^2,(3,21),(4,32)^7,(12,192)^6&17\\
5&(1,6)^2,(2,14),(3,24)^3,(4,36)^2,(20,500)^3&11\\
6&(1,7),(2,16)^9,(3,27)^2,(4,40)^2,(5,55),(6,72)^3&18\\
7&(1,8)^5,(2,18)^3,(5,60),(6,78),(8,120),(9,144),(42,2058)^4&16\\
8&(1,9)^2,(2,20)^2,(4,48)^{10},(6,84)^2,(8,128)^{75},(12,240)^2,(24,768)^{11},(56,3584)^{12}&116\\
9&(1,10)^2,(2,22),(3,36)^2,(4,52),(7,112),(8,136),(12,252),&42\\
&(16,400)^2,(18,486)^{13},(72,5832)^{18}\\
10&(1,11),(2,24)^{11},(3,39),(6,96)^{12},(8,144)^5,(9,171),(14,336),(18,504)^2&34\\
11&(1,12)^5,(2,26),(3,42),(4,60)^4,(5,80),(16,432)^5,(21,672)^2,(24,840),(110,13310)^6&26
\end{array}
\]
\end{Thm}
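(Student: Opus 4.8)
The plan is to turn the classification into a finite enumeration governed by the bound $|G|\le e^4-e^3$ of \cite[Theorem~1.2]{HLS}, and then to dispatch the residual orders by a mixture of direct GAP computation and the two lemmas above. First I would fix $e$ with $2\le e\le11$ and observe that $n=d(d+e)\le e^4-e^3$ is quadratic in $d$, so that $d\le\tfrac12\bigl(-e+\sqrt{e^2+4e^3(e-1)}\bigr)$; only finitely many $d$ survive, and after discarding those for which $n$ is not a group order one is left with an explicit candidate list for each $e$. For $e$ a prime power the maximum of this list is attained by Isaacs' construction \cite{IsaacsFR}, which guarantees existence of the extremal entries $(2,8)$, $(6,54)$, $(12,192)$, $(20,500)$, $(42,2058)$, $(56,3584)$, $(72,5832)$ and $(110,13310)$; their exact multiplicities then follow from the case analysis below.

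For every surviving pair $(d,e)$ the task is to count the groups $G$ of order $n=d(d+e)$ with $d\in\{\psi(1):\psi\in\Irr(G)\}$. Whenever $n$ lies in the range covered by the small groups library this is a routine loop in GAP \cite{GAPnew}: iterate over \texttt{SmallGroup}$(n,\cdot)$, form the list of character degrees, and tally those attaining $d$. This disposes of all of $e\le6$ and of the bulk of the remaining table, including the $2$-group cases such as $(8,128)^{75}$ and $(16,432)^5$ and the mixed orders such as $(24,768)^{11}$.

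The delicate part is the handful of orders lying beyond the library or realizing no admissible degree, where blind enumeration is impossible; these split into two kinds. The spurious pairs $(32,8)$, $(48,8)$, $(54,9)$, $(55,9)$, $(54,10)$, $(80,10)$ and $(64,11)$ contribute nothing and are excluded by the first lemma via Clifford theory, a Hall--Higman analysis of $\pcore_p(G)$, and Frobenius/defect-zero arguments, so they never enter the table. The genuinely large orders $2058=42\cdot49$, $3584=56\cdot64$, $5832=72\cdot81$ and $13310=110\cdot121$ must instead be built explicitly: the groups of order $2058$ via the \texttt{GrpConst} package, and the order $3584$ via the structural reduction of the second lemma, which locates $N=\pcore_2(G)$ of order $2^9$, forces $\Z(N)=N'=\Phi(N)$ of order $8$, and thereby realizes $G$ as $N\rtimes C_7$ for $N$ an immediate descendant computed with \texttt{AnuPQ} and \texttt{AutPGrp}, yielding exactly $12$ groups. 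The remaining large orders at $e=9$ and $e=11$ are treated by the same descent-and-automorphism technique applied to the normal $3$- and $11$-subgroup respectively.

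The main obstacle I expect is exactly this extremal band: the bound $|G|\le e^4-e^3$ keeps these orders on the candidate list, yet they are far too large for the small groups library, so each demands a bespoke Clifford-theoretic reduction that pins down a normal $p$-subgroup $N$, bounds $|N|$ and the ramification index $e$, and collapses the construction of $G$ to immediate descendants of a small quotient. Controlling the combinatorial growth of these $p$-group descendants — and then verifying computationally that the assembled groups carry an irreducible character of degree $d$ and that no further groups do — is the crux, and is precisely what the two lemmas are designed to accomplish.
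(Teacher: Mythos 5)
Your proposal follows essentially the same route as the paper: the paper's "proof" of this theorem is precisely the surrounding discussion — bound the candidate pairs via $|G|\le e^4-e^3$ from \cite{HLS}, settle the orders within reach of the small groups library (and order $2058$ via \texttt{GrpConst}) by direct GAP enumeration, and dispatch the exceptional pairs through the two preceding lemmas, with Isaacs' construction accounting for the extremal entries at prime-power $e$. The only cosmetic deviation is that you attribute the orders $5832$ and $13310$ to a bespoke descent argument, whereas the paper simply subsumes them under the GAP computations; this does not affect correctness.
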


\section*{Acknowledgment}
I thank Gabriel Navarro for some useful comments on this paper. Alexander Hulpke has promptly fixed some bugs in GAP, which were discovered in the course of this work.
The work is supported by the German Research Foundation (\mbox{SA 2864/4-1}).

\end{document}